\newcommand\PG{{\rm PG}}
\newcommand\PGL{{\mbox{PGL}}}
\newcommand\GF{{\rm\mbox{GF}}}
\renewcommand{\P}{\mathcal{P}}
\newcommand{\Label}{\label}
\newtheorem{theorem}{Theorem}[section]
\newtheorem{lemma}[theorem]{Lemma}
\newtheorem{corollary}[theorem]{Corollary}
\newenvironment{proof}{\noindent{\bf Proof}\hspace{0.5em}}
    { \null  \hfill $\square$ \par}
\newcommand{\bbb}{{\mathscr B}}
\newcommand{\bbc}{{\mathcal C}}
\newcommand{\bbn}{{\mathcal N}}
\renewcommand{\r}{{q}}
\renewcommand{\S}{{\cal S}}
\newcommand{\C}{{\cal C}}
\newcommand{\N}{{\cal N}}
\newcommand{\si}{\Sigma_\infty}
\newcommand{\li}{\ell_\infty}
\newcommand{\abb}{{\cal A(\cal S)}}
\newcommand{\pbb}{{\cal P(\cal S)}}
\newcommand{\orsps}{order-$\r$-subplanes}
\newcommand{\orsls}{order-$\r$-sublines}
\newcommand{\orsp}{order-$\r$-subplane}
\newcommand{\orsl}{order-$\r$-subline}
\newcommand{\takeaway}{\backslash}
\begin{document}

\title{A Characterisation of Tangent Subplanes of $\PG(2,\r^3)$}
\date{}

\author{S.G. Barwick and Wen-Ai Jackson
\date{\today}
\\ School of Mathematics, University of Adelaide\\
Adelaide 5005, Australia
\\ \\
}

\maketitle

Corresponding Author: Dr Susan Barwick, University of Adelaide, Adelaide
5005, Australia. Phone: +61 8 8303 3983, Fax: +61 8 8303 3696, email:
susan.barwick@adelaide.edu.au

Keywords: Bruck-Bose representation, $\PG(2,q^3)$, order q subplanes

AMS code: 51E20

\begin{abstract}
In \cite{barw12}, the authors determine the representation of \orsp s and
\orsl s of $\PG(2,q^3)$ in the Bruck-Bose representation in $\PG(6,q)$. 
In particular, they showed that an \orsp\ of $\PG(2,q^3)$ corresponds to a
certain ruled surface in $\PG(6,q)$. In this article we show that the
converse holds,
namely that any ruled surface satisfying the required properties 
corresponds to a tangent \orsp\ of $\PG(2,q^3)$. 
\end{abstract}


\section{Introduction}\Label{section1}

We begin with a brief introduction to  $2$-spreads in $\PG(5,q)$, and the
Bruck-Bose representation of $\PG(2,q^3)$ in $\PG(6,q)$, and introduce the
notation we will use. 

A 2-{\em spread} of $\PG(5,\r)$ is a set of $\r^3+1$ planes that partition
$\PG(5,\r)$. 
The following construction of a regular 2-spread of $\PG(5,\r)$ will be
needed. Embed $\PG(5,\r)$ in $\PG(5,\r^3)$ and let $g$ be a line of
$\PG(5,\r^3)$ disjoint from $\PG(5,\r)$. The Frobenius automorphism of
$\GF(q^3)$ where
$x\mapsto x^q$ induces a collineation of $\PG(5,q^3)$. Let $g^\r$, $g^{\r^2}$ be the
conjugate lines of $g$; both of these are disjoint from $\PG(5,\r)$. Let $P_i$ be
a point on $g$; then the plane $\langle P_i,P_i^\r,P_i^{\r^2}\rangle$ meets
$\PG(5,\r)$ in a plane. As $P_i$ ranges over all the points of  $g$, we get
$\r^3+1$ planes of $\PG(5,\r)$ that partition $\PG(5,q)$. These planes form a
regular spread $\S$ of $\PG(5,\r)$. The lines $g$, $g^\r$, $g^{\r^2}$ are called the (conjugate
skew) {\em transversal lines} of the spread $\S$. Conversely, given a regular 2-spread
in $\PG(5,\r)$,
there is a unique set of three (conjugate skew) transversal lines in $\PG(5,\r^3)$ that generate
$\S$ in this way. 
See \cite{hirs91} for
more information on 2-spreads.

We work in linear representation of a finite
translation plane $\P$ of dimension at most three over its kernel,
an idea which was developed independently by
Andr\'{e}~\cite{andr54} and Bruck and Bose
\cite{bruc64,bruc66}. 
Let $\si$ be a hyperplane of $\PG(6,\r)$ and let $\S$ be a 2-spread
of $\si$. We use the phrase {\em a subspace of $\PG(6,\r)\takeaway\si$} to
  mean a subspace of $\PG(6,\r)$ that is not contained in $\si$.  Consider the following incidence
structure:
the \emph{points} of $\abb$ are the points of $\PG(6,\r)\takeaway\si$; the \emph{lines} of $\abb$ are the 3-spaces of $\PG(6,\r)\takeaway\si$ that contain
  an element of $\S$; and \emph{incidence} in $\abb$ is induced by incidence in
  $\PG(6,\r)$.
Then the incidence structure $\abb$ is an affine plane of order $\r^3$. We
can complete $\abb$ to a projective plane $\pbb$; the points on the line at
infinity $\li$ have a natural correspondence to the elements of the 2-spread $\S$.
The projective plane $\pbb$ is the Desarguesian plane $\PG(2,\r^3)$ if and
only if $\S$ is a regular 2-spread of $\si\cong\PG(5,\r)$ (see \cite{bruc69}).

We will be using the cubic extension $\PG(6,q^3)$ of $\PG(6,q)$. If $K$ is
a subspace or curve of $\PG(6,q)$, we use $K^*$ to denote the natural
extension of $K$ to $\PG(6,q^3)$. 

\section{The characterisation}

In \cite{barw12}, the authors prove the following result that an \orsp\ of $\PG(2,q^3)$ corresponds to a
certain ruled surface in $\PG(6,q)$. In this article we show that the
converse holds,
namely that any ruled surface satisfying the required properties 
corresponds to a tangent \orsp\ of $\PG(2,q^3)$. We use the notation of
Section~\ref{section1} and recall the following
result.

\begin{theorem}{\rm \cite[\rm Theorem 2.7]{barw12}}\Label{subplane-tangent}
  Let $B$ be an \orsp\ of $\PG(2,\r^3)$ that is tangent to $\li$ in the
  point $T$. Let $\pi_T$ be the spread element corresponding to $T$.
Then $B$ determines a set $\bbb$  of points in $\PG(6,\r)$ (where the
affine points of $B$ correspond to the affine points of $\bbb$) such that:
\begin{enumerate}
\item[{\rm (a)}]  $\bbb$ is a ruled surface with conic directrix $\bbc$ contained in
  the plane $\pi_T\in\S$, and normal rational curve directrix $\bbn$ contained in a
  3-space $\Sigma$ that meets $\si$ in a spread element (distinct from
  $\pi_T$). The points of $\bbb$ lie on $\r+1$ pairwise disjoint generator lines joining $\bbc$ to $\bbn$.
\item[{\rm (b)}]  The $\r+1$ generator lines of $\bbb$ joining $\bbc$
  to $\bbn$ are determined by a projectivity from $\bbc$ to $\bbn$.
\item[{\rm (c)}]  When we extend $\bbb$
  to $\PG(6,\r^3)$, it contains the conjugate transversal
  lines $g,g^\r,g^{\r^2}$ of the spread $\S$.
\end{enumerate}
\end{theorem}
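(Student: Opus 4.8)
The plan is to realise $B$ explicitly in Bruck--Bose coordinates and read off the scroll directly. First I would fix coordinates on $\PG(2,\r^3)$ with $\li\colon z=0$, fix a basis $\{1,\tau,\tau^2\}$ of $\GF(\r^3)$ over $\GF(\r)$, and use the isomorphism sending an affine point $(x,y,1)$ with $x=x_0+x_1\tau+x_2\tau^2$, $y=y_0+y_1\tau+y_2\tau^2$ to $(x_0,x_1,x_2,y_0,y_1,y_2,1)\in\PG(6,\r)\takeaway\si$, where $\S$ is the regular spread with transversals $g,g^\r,g^{\r^2}$ arising from the Frobenius map and with spread element $\pi_{(x:y)}=\{(\mu x,\mu y):\mu\in\GF(\r^3)\}$. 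I would then describe the $\r^2+\r$ affine points of a general tangent \orsp\ $B$ by an explicit $\GF(\r)$-parametrisation, normalising the position of $T$ by a collineation fixing $\li$ where convenient.

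The organising principle is that the $\r+1$ lines of $B$ through $T$ form a pencil, and I claim these are exactly the $\r+1$ generators. Each such line is an \orsl\ tangent to $\li$ at $T$, lying on a line of $\PG(2,\r^3)$ through $T$; by the representation of tangent \orsls\ in \cite{barw12}, its $\r$ affine points map to $\r$ collinear points of $\PG(6,\r)$ on a line $\ell_i$ meeting $\pi_T$ in a single point $C_i$. Letting $i$ range over the pencil, I would show the induced map $i\mapsto C_i$ into $\pi_T$ is a quadratic (Veronese) parametrisation, so that $\bbc=\{C_i\}$ is a conic in $\pi_T$; its nondegeneracy follows since the three extended transversal points $g\cap\pi_T^{*}$, $g^\r\cap\pi_T^{*}$, $g^{\r^2}\cap\pi_T^{*}$ span $\pi_T^{*}$. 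For the second directrix I would fix a line $n$ of $B$ not through $T$; then $n$ is an \orsl\ disjoint from $\li$ lying on a line $\bar n$ of $\PG(2,\r^3)$, so by the \orsl\ results of \cite{barw12} its image $\bbn$ is a normal rational curve lying in the $3$-space $\Sigma=\langle\bar n\rangle$, which meets $\si$ in the spread element $\pi_{P_n}$ with $P_n=\bar n\cap\li\neq T$. Since $n$ meets each pencil line once, $\bbn$ supplies one affine point $N_i$ per generator, establishing (a); geometrically $\bbb$ is then a rational normal scroll $S_{2,3}$ spanning $\PG(6,\r)$ with disjoint directrix spaces $\pi_T$ and $\Sigma$. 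Part (b) follows because both parametrisations $i\mapsto C_i$ and $i\mapsto N_i$ are governed by the single projective parameter on the pencil, so $C_i\mapsto N_i$ is a projectivity $\bbc\to\bbn$.

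For (c) I would pass to $\PG(6,\r^3)$ and use that the whole parametrisation of $B$ has coefficients in $\GF(\r)$, hence commutes with the Frobenius collineation. Over $\GF(\r^3)$ the conic $\bbc^{*}$ acquires $\r^3+1$ points and the ruling extends to $\r^3+1$ generators joining $\bbc^{*}$ to $\bbn^{*}$ by the extended projectivity. A line lying on $\bbb^{*}$ can be neither the degree-two directrix $\bbc^{*}$ nor the degree-three directrix $\bbn^{*}$, so any transversal contained in $\bbb^{*}$ must be one of these extended generators; I would identify $g,g^\r,g^{\r^2}$ as precisely the three generators through the points $g\cap\pi_T^{*},g^\r\cap\pi_T^{*},g^{\r^2}\cap\pi_T^{*}$ of $\bbc^{*}$, equivalently the three whose $\bbn^{*}$-ends are the points where $\bbn^{*}$ crosses the plane $\Sigma^{*}\cap\si^{*}$.

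The main obstacle is this last identification in (c): it is where the $\GF(\r)$-rationality of $B$ must be reconciled with the $\GF(\r^3)$-Frobenius data defining $g,g^\r,g^{\r^2}$, and it amounts to checking that for each transversal the extended projectivity sends its intersection with $\pi_T^{*}$ to its intersection with $\Sigma^{*}$. The degree computations for the conic and the normal rational curve in (a) are routine once the parametrisation is written down, and (b) is then essentially formal; it is the Frobenius-compatibility check underlying (c) that carries the real content.
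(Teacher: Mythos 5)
The first thing to note is that this paper does not prove Theorem~\ref{subplane-tangent}: it is quoted verbatim from \cite[Theorem 2.7]{barw12} and used only as an input to the counting argument for Theorem~\ref{hope1}. So there is no in-paper proof to compare your attempt against; the comparison would have to be with the proof in \cite{barw12}, which, like your plan, works from explicit Bruck--Bose coordinates and the subline correspondences.

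On its own terms your outline assembles the right ingredients: the $\r+1$ lines of $B$ through $T$ are tangent \orsls\ whose images are the generators, each meeting $\pi_T$ in a point $C_i$; a line of $B$ not through $T$ is an \orsl\ disjoint from $\li$ whose image is a normal rational curve in a $3$-space about a spread element distinct from $\pi_T$; and the common pencil parameter yields the projectivity in (b). But as written it is a plan rather than a proof, because the two statements carrying all the content are only announced. First, that $i\mapsto C_i$ is a genuinely quadratic (hence injective, hence conic) parametrisation is exactly what must be extracted from the coordinates of $B$; nothing in the subline results gives it for free, and your nondegeneracy argument quietly assumes that the transversal points $g\cap\pi_T^*$, $g^\r\cap\pi_T^*$, $g^{\r^2}\cap\pi_T^*$ already lie on $\bbc^*$, which is part of what (c) asserts, so the logical order needs care. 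Second, for (c) it is not enough to observe that any line on $\bbb^*$ must be a generator; you must show the transversals \emph{are} contained in $\bbb^*$, i.e.\ that $\bbc^*$ contains $g\cap\pi_T^*$ and its conjugates, that $\bbn^*$ contains $g\cap\alpha^*$ and its conjugates, and that the extended projectivity matches these up conjugate by conjugate. You correctly identify this Frobenius-compatibility check as the main obstacle, but it is left undone, so the attempt stops short precisely where the theorem is hardest.
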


In this article we prove the converse of this result.

\begin{theorem}\Label{hope1}
In $\PG(6,q)$, let $\C$ be a conic in a spread element $\pi$ such that in the cubic
extension $\PG(6,q^3)$, $\C^*$ contains the three transversal points $P=\pi^*\cap
g,P^q=\pi^*\cap g^q,
P^{q^2}=\pi^*\cap g^{q^2}$. Let $\Sigma$ be a 3-space of
$\PG(6,q)\setminus\si$ about a spread element $\alpha$ distinct from
$\pi$. Let $\N$ be a normal rational curve in $\Sigma$ that in the cubic extension contains the points $Q=\alpha^*\cap
g,Q^q=\alpha^*\cap g^q,
Q^{q^2}=\alpha^*\cap g^{q^2}$. In $\PG(6,q^3)$, let $\bbb^*$ be the unique ruled surface  with
directrices $\C^*,\N^*$ defined by the projectivity that maps $P^{q^i}\mapsto
Q^{q^i}$, $i=1,2,3$. Then the ruled surface $\bbb$ of $\PG(6,q)$ corresponds to an \orsp\ of $\PG(2,q^3)$
that is tangent to $\li$. 
\end{theorem}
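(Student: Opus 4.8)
The plan is to invert the explicit correspondence underlying Theorem~\ref{subplane-tangent}: coordinatise $\PG(6,q)$ and $\si$ compatibly with the Bruck--Bose map, rewrite the hypotheses as explicit parametric data for $\bbb$, read off the affine points of $\bbb$, map them back into $\PG(2,q^3)$, and recognise the resulting point set as the affine part of a tangent \orsp. A useful first observation, showing that the hypotheses already force all of conditions (a)--(c) of Theorem~\ref{subplane-tangent}, is that the generator of $\bbb^*$ through $P=\pi^*\cap g$ is, by construction, the line joining $P$ to its image $Q=\alpha^*\cap g$ under the defining projectivity; since $P\neq Q$ and both lie on $g$, this generator is exactly $g$, and applying the Frobenius collineation $x\mapsto x^q$ shows that $g^q$ and $g^{q^2}$ are likewise generators. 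Thus $\bbb^*$ contains the three conjugate transversals, which is the rigidity we will exploit. For the point count, each of the $q+1$ generators of $\bbb$ defined over $\GF(q)$ meets $\si$ only in its point on $\C\subset\pi$ and is otherwise affine, so $\bbb$ has $q(q+1)=q^2+q$ affine points, exactly the size of the affine part of a tangent \orsp.

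Next I would fix coordinates with $\si\colon x_6=0$, realise $\S$ as the regular spread arising from $\GF(q^3)=\GF(q)[\tau]$, and take $g,g^q,g^{q^2}$ in their standard parametric form in terms of $\tau,\tau^q,\tau^{q^2}$. The conjugate triple $P,P^q,P^{q^2}$ is a frame of the plane $\pi^*$ and the triple $Q,Q^q,Q^{q^2}$ spans $\alpha^*$, so after applying a collineation of $\PG(6,q)$ fixing $\si$ and $\S$ (which induces a collineation of $\PG(2,q^3)$ and preserves the property of being an \orsp) I may put $\C$ in a canonical form $t\mapsto\C(t)$ with $\C(\tau^{q^i})=P^{q^i}$, and $\N$ in canonical twisted-cubic form $s\mapsto\N(s)$ with $\N(\tau^{q^i})=Q^{q^i}$. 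Because $\C$ and $\N$ are defined over $\GF(q)$ while their extensions pass through prescribed conjugate triples, these normalisations pin down the quadratic form of $\C$ and the cubic parametrisation of $\N$ essentially uniquely. In these parameters the defining projectivity fixes $\tau,\tau^q,\tau^{q^2}$ and hence, being an element of $\PGL(2,q^3)$ fixing three points, is the identity on the common parameter line $\PG(1,q^3)$; the generators of $\bbb^*$ are therefore the lines $\langle\C(t),\N(t)\rangle$, and $\bbb$ is the set of their $\GF(q)$-rational points.

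The core computation is then to describe these affine points inside $\PG(2,q^3)$. Since the twisted cubic $\N^*$ meets the plane $\alpha^*$ exactly in the three points $Q,Q^q,Q^{q^2}$, none of which is rational, every rational point $\N(t)$ with $t\in\PG(1,q)$ is affine; hence each rational generator $\langle\C(t),\N(t)\rangle$ joins the infinite point $\C(t)\in\pi\subset\si$ to an affine point and contributes exactly $q$ affine points, recovering the count $q^2+q$. Writing a general point of this generator as a $\GF(q)$-combination of $\C(t)$ and $\N(t)$ and applying the inverse Bruck--Bose map, I would obtain the affine points in the form $(X(s,t),Y(s,t))$ with $s,t\in\GF(q)$ for explicit rational functions $X,Y$ assembled from $\tau$ and the two parametrisations. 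The transversal normalisation is what makes these functions transparent: the incidences $\C(\tau^{q^i})=P^{q^i}$ and $\N(\tau^{q^i})=Q^{q^i}$ on the conjugate lines force $X$ and $Y$ to be $\GF(q)$-linear in a fixed pair of $\GF(q^3)$-valued coordinate functions, so that the image is visibly coordinatised by $\GF(q)\times\GF(q)$.

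The main obstacle is the final recognition step: confirming that these $q^2+q$ points genuinely close up into a subplane $B\cong\PG(2,q)$ rather than merely forming a set of the correct cardinality. I would resolve this in one of two equivalent ways. The first is to exhibit a collineation of $\PG(2,q^3)$ carrying a fixed standard tangent \orsp\ onto the computed point set; this is available precisely because, once the conjugate transversal triples on $\C^*$ and $\N^*$ are fixed, the normalised parametric data is determined up to the collineation used in the normalisation, so the point set carries no further free parameters. The second is to verify directly that every line of $\abb$, that is, every $3$-space of $\PG(6,q)\setminus\si$ through a spread element, meets $\bbb$ in $0$, $1$, or a full \orsl\ of $q+1$ points; this is exactly the intersection pattern characterising a $\PG(2,q)$-subplane, and the computation is finite because the rigidity supplied by condition (c) reduces it to checking the sublines cut out on the rational generators. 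Either route produces the subplane $B$; finally, noting that the only points of $\bbb$ on $\si$ are those of $\C\subset\pi$, which correspond to the single point $T\in\li$, shows $B\cap\li=\{T\}$, so $B$ is tangent to $\li$, completing the proof.
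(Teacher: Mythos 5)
Your strategy is genuinely different from the paper's: you attempt a direct, constructive proof (coordinatise, parametrise $\bbb$, and recognise its affine point set as a subplane), whereas the paper proves the converse purely by counting. It shows that the number of triples $(\C,\N,\bbb)$ satisfying the hypotheses is $q^9(q^3-1)(q^2+q+1)$ (Lemma~\ref{count-triples}), that the number of tangent \orsps\ through $T$ is $q^7(q^3-1)(q^2+q+1)$ (Lemma~\ref{count-orsp}), and that each such subplane contributes exactly $q^2$ triples (one conic directrix, $q^2$ normal rational curve directrices); since the two totals agree, the surfaces arising from subplanes exhaust all surfaces satisfying the hypotheses. Your opening observations are correct and worth keeping: the generator of $\bbb^*$ through $P$ is $\langle P,Q\rangle=g$ and its conjugates are $g^q,g^{q^2}$, so condition (c) of Theorem~\ref{subplane-tangent} is automatic, and the affine count $q(q+1)=q^2+q$ is right because a special normal rational curve is disjoint from $\si$.

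However, the argument has a genuine gap at exactly the step you flag as ``the main obstacle'', and neither of your two escape routes closes it. Route one rests on the assertion that, once the transversal data is normalised, ``the point set carries no further free parameters''. This is not so: with $g,g^q,g^{q^2}$ and $\pi$ fixed there remain $q^2+q+1$ special conics in $\pi$ and $q^3\cdot q^3\cdot q^3(q^3-1)$ choices of special normal rational curve, and what you would actually need is that the stabiliser of the normalisation acts transitively on the resulting ruled surfaces. Establishing that transitivity is essentially equivalent to the theorem itself (given Theorem~\ref{subplane-tangent} and the transitivity of the collineation group fixing $\li$ on tangent \orsps), so asserting it without proof is close to circular. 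Route two --- verifying that every line of $\abb$ meets $\bbb$ in $0$, $1$ or $q+1$ points --- is the entire computational content of a direct proof; saying that the rigidity ``reduces it to checking the sublines cut out on the rational generators'' neither identifies the relevant $3$-spaces nor performs the check, and you would additionally need the (true but not immediate) fact that a $(q^2+q+1)$-set of type $(0,1,q+1)$ in $\PG(2,q^3)$ is necessarily a subfield subplane. As written, the proposal establishes the correct cardinality and condition (c), but not that the point set closes up into an \orsp; the paper's counting argument is precisely the device that makes this recognition step unnecessary.
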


To simplify the following statements, we define a {\bf special conic} of a spread
element $\pi$ to be a conic that in the cubic extension $\PG(6,q^3)$ contains the
transversal points $P=\pi^*\cap g,P^q=\pi^*\cap g^q, P^{q^2}=\pi^*\cap g^{q^2}$.
A {\bf special normal rational curve} in a 3-space $\Sigma$ of
$\PG(6,q)\setminus\si$ through a spread element $\alpha\neq\pi$ is one
which in the cubic extension $\PG(6,q^3)$
contains the three transversal points
$Q=\alpha^*\cap g,Q^q=\alpha^*\cap g^q, Q^{q^2}=\alpha^*\cap g^{q^2}$. Note
that a special normal rational curve is disjoint from $\si$.

This allows us to make a compact statement that combines
Theorems~\ref{subplane-tangent} and \ref{hope1}.

\begin{corollary}
Let $\bbb$ be a ruled surface of $\PG(6,q)$ defined by a projectivity from a conic
directrix 
$\C$ to a normal rational curve directrix $\N$. 
Then $\bbb$ corresponds to an \orsp\ of $\PG(2,q^3)$ if and only if
$\C$ is a special conic in
a spread element $\pi$, $\N$ is a special
normal rational curve in a 3-space about a spread element distinct from
$\pi$, and
in the cubic extension $\PG(6,\r^3)$ of $\PG(6,\r)$, $\bbb$ contains the transversals of the regular
spread $\S$. 
\end{corollary}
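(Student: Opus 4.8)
Proof of the corollary (proposal).

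The plan is to obtain the corollary by stitching together Theorems~\ref{subplane-tangent} and \ref{hope1}, the only genuinely new work being to reconcile their differently phrased hypotheses. The bridge in both directions is the following incidence bookkeeping. Once $\C$ sits in a spread element $\pi$ and $\N$ sits in a $3$-space $\Sigma$ about a spread element $\alpha\neq\pi$, the transversal $g$ meets $\pi^*$ in the single point $P=\pi^*\cap g$ and meets $\Sigma^*$ in the single point $Q=\alpha^*\cap g$, because $g\subset\si^*$ while $\Sigma^*\cap\si^*=\alpha^*$. Moreover, since $\N$ is disjoint from $\si$, each generator of $\bbb^*$ meets $\pi^*$ only at its point of $\C^*$ and meets $\Sigma^*$ only at its point of $\N^*$, so $\bbb^*\cap\pi^*=\C^*$ and $\bbb^*\cap\Sigma^*=\N^*$. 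This is exactly what will let me translate the coordinate-free statement ``$\bbb^*$ contains $g,g^q,g^{q^2}$'' into the explicit projectivity condition $P^{q^i}\mapsto Q^{q^i}$ of Theorem~\ref{hope1}, and back.

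For the reverse implication I would assume the three stated conditions. Specialness of $\C$ and $\N$ places $P$ on $\C^*$ and $Q$ on $\N^*$, so both lie on $g$. Since $\bbb^*\supset g$ and $g$ meets $\pi^*$ only at $P$, the unique generator of $\bbb^*$ through $P$ is $g$ itself; as that generator terminates on $\N^*$ and $g\cap\Sigma^*=Q$, the defining projectivity sends $P\mapsto Q$. The identical argument applied to $g^q$ and $g^{q^2}$, which meet $\pi^*,\Sigma^*$ in the conjugate points $P^q,Q^q$ and $P^{q^2},Q^{q^2}$, gives $P^q\mapsto Q^q$ and $P^{q^2}\mapsto Q^{q^2}$. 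Thus $\bbb$ is precisely the ruled surface of Theorem~\ref{hope1} and therefore corresponds to an \orsp\ tangent to $\li$.

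For the forward implication I would start from a ruled surface $\bbb$ of the stated conic-to-normal-rational-curve type that corresponds to an \orsp\ $B$. The first task is to see that $B$ must be tangent to $\li$: the directrix $\C$ lies in the single spread element $\pi$ while $\N$ is disjoint from $\si$, which is the signature of the tangent case, a secant subplane meeting $\li$ in an \orsl\ and having a representation built from $q+1$ spread elements. Granting tangency, Theorem~\ref{subplane-tangent} immediately supplies parts (a) and (c): $\C$ is a conic in the spread element $\pi$, $\N$ is a normal rational curve in a $3$-space about a distinct spread element, and $\bbb^*$ contains the transversals. Specialness then falls out of (c): since $g\subset\bbb^*$ and $\bbb^*\cap\pi^*=\C^*$, the point $P=g\cap\pi^*$ lies on $\C^*$, and with its conjugates this shows $\C$ is special; the same argument using $\Sigma^*$ and $\N^*$ shows $\N$ is special.

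I expect the main obstacle to be the forward-direction tangency claim, as it is the one step not handed over verbatim by the two theorems: I must argue that the conic-to-normal-rational-curve form cannot arise from a secant subplane, so that Theorem~\ref{subplane-tangent} is actually applicable. Once that is settled, everything reduces to the elementary incidence computations above, namely intersecting $g$ with $\pi^*$ and $\Sigma^*$ and identifying the generator through $P$, which are routine given the intersection counts established in the first paragraph.
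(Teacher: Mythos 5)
Your overall strategy --- treating the corollary as the conjunction of Theorems~\ref{subplane-tangent} and \ref{hope1} and supplying the incidence bookkeeping that reconciles their hypotheses --- is exactly what the paper intends; indeed the paper states the corollary with no proof at all, as an immediate combination of the two theorems, so the glue is the whole content here. Your reverse direction is correct and is the useful part of that glue: showing that ``$\bbb^*$ contains $g,g^q,g^{q^2}$'' forces the defining projectivity to send $P^{q^i}\mapsto Q^{q^i}$ is precisely what reduces the corollary's hypotheses to those of Theorem~\ref{hope1}. One small point there: the inference ``$g\subset\bbb^*$ and $g\cap\pi^*=\{P\}$, hence $g$ is the generator through $P$'' needs a line of justification, since a priori a line could lie on a ruled surface without being a generator. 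It is true here: every point of $g\setminus\{P,Q\}$ lies on a generator which, being forced to have two points in $\si^*$, must lie in $\si^*$; only the three generators ending at $Q,Q^q,Q^{q^2}$ do so, and $q^3-1>3$ points of $g$ cannot be absorbed by three lines unless $g$ is one of them.

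The genuine gap is the one you flag yourself: the tangency claim in the forward direction is not proved, and the justification you sketch for it is circular. You argue that $B$ must be tangent ``because $\C$ lies in the single spread element $\pi$ while $\N$ is disjoint from $\si$'' --- but in the forward direction those facts about $\C$ and $\N$ are part of what is to be proved, not hypotheses; all you are given is a ruled surface with a conic directrix and a normal rational curve directrix whose affine points form an \orsp. You cannot invoke Theorem~\ref{subplane-tangent} until tangency is established, and you cannot establish tangency from the conclusions of Theorem~\ref{subplane-tangent}. Closing this requires an independent argument that a secant \orsp\ (or one disjoint from $\li$) cannot have such a ruled surface as its Bruck--Bose point set --- for instance via the classification of secant-subplane representations in \cite{barw12}, or by comparing affine point counts ($q^2+q$ for the ruled surface once its intersection with $\si$ is pinned down, versus $q^2$ for a secant subplane). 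You should also note why the conic directrix supplied by Theorem~\ref{subplane-tangent} coincides with the given $\C$: the paper's proof of Theorem~\ref{hope1} records that such a ruled surface has exactly one conic directrix but $q^2$ normal rational curve directrices, each of which is special. In fairness, the paper leaves all of this implicit as well.
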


We will prove this result by counting. By Theorem~\ref{subplane-tangent}, a
tangent \orsp\  of
$\PG(2,q^3)$ corresponds in $\PG(6,q)$ to a ruled surface with a special
conic directrix and a special normal
rational curve directrix that when extended to $\PG(6,q^3)$ contains the
transversals of the spread $\S$. We show the converse is true by counting
the 
number of tangent \orsp s of $\PG(2,q^3)$, and the number of such ruled
surfaces in $\PG(6,q)$ and
showing that the two sets have the same number of elements. 
We proceed with a series of lemmas.

\begin{lemma}\Label{count-orsp}
 The number of tangent \orsps\ of $\PG(2,q^3)$ through a fixed point $T$ of
  $\li$ is $q^7(q^3-1)(q^2+q+1)$.
\end{lemma}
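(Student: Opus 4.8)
The plan is to count tangent order-$q$-subplanes of $\PG(2,q^3)$ through a fixed point $T$ of $\li$. The natural strategy is to count ordered configurations that determine such a subplane uniquely and then divide by the number of configurations yielding the same subplane. First I would fix coordinates so that $T$ is a convenient point of $\li$, and recall that an order-$q$-subplane is a Baer-type subplane isomorphic to $\PG(2,q)$ sitting inside $\PG(2,q^3)$. Since the subplane $B$ is tangent to $\li$, it meets $\li$ in exactly the single point $T$, so all its remaining $q^2+q$ points are affine. I would count the number of ways to choose a subplane through $T$ by selecting an appropriate frame (an ordered set of points in general position) inside the subplane, together with the requirement that it contains $T$ and is tangent there.

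\medskip

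The key counting steps I anticipate are as follows. A subplane $\PG(2,q)$ is determined by a quadrangle (four points, no three collinear); so I would count ordered quadrangles in $\PG(2,q^3)$ whose generated subplane is an order-$q$-subplane tangent to $\li$ at $T$, and divide by the number of such ordered quadrangles lying in a fixed subplane, which is the order of the collineation-induced count $|\PGL(3,q)|$ acting regularly on frames, namely the standard number of ordered quadrangles in $\PG(2,q)$. Concretely I would place $T$ on $\li$, require the subplane to be tangent (so the subline of $B$ on $\li$ degenerates to the single point $T$, forcing $B \cap \li = \{T\}$), and then count the free choices: the remaining structure of $B$ is pinned down by choosing affine points subject to the subplane and tangency constraints. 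The arithmetic should assemble into the factor $q^7(q^3-1)(q^2+q+1)$, where I expect $q^2+q+1$ to arise from a choice of line/subline direction through $T$, the factor $q^3-1$ from a scaling or a choice of a point on an orbit, and $q^7$ from the affine degrees of freedom in positioning the subplane.

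\medskip

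An alternative and perhaps cleaner route is to use the transitivity of $\PGammaL(3,q^3)$ (or its subgroup fixing $\li$ and $T$) on tangent subplanes through $T$: if I can identify one such subplane and compute the order of its stabiliser within the relevant group, then the orbit-stabiliser theorem gives the count directly as (order of group fixing $T$)$/$(order of stabiliser of $B$). This reduces the problem to a clean group-order computation, provided the group acts transitively on the set of tangent order-$q$-subplanes through $T$. I would verify transitivity by appealing to the homogeneity of the Bruck--Bose setup and the known action on Baer-type sublines and subplanes, then extract the stabiliser order from the rigidity of the $\PG(2,q)$ structure together with the Frobenius-conjugacy condition that forces tangency at $T$.

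\medskip

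The main obstacle I foresee is ensuring that the tangency condition is correctly encoded in the count and that no subplane is over- or under-counted. Specifically, distinguishing tangent subplanes (meeting $\li$ in exactly one point) from secant ones (meeting $\li$ in a full order-$q$-subline of $q+1$ points) requires a careful analysis of how the subline structure of $B$ interacts with $\li$; I must confirm that exactly one point of $B$ lies on $\li$ and that the local configuration at $T$ is the tangent one. Getting the divisor right — the precise number of generating quadrangles or the exact stabiliser order — is where the bookkeeping is most delicate, and a small error there would shift the final factors. I would cross-check the result by an independent double count (for instance, counting incident pairs of tangent subplanes and points, or comparing against the total number of order-$q$-subplanes of $\PG(2,q^3)$) to validate the formula $q^7(q^3-1)(q^2+q+1)$.
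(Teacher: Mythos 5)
Your proposal is a plan rather than a proof: both routes you sketch (counting generating quadrangles subject to the tangency constraint, or orbit--stabiliser for the stabiliser of $T$ and $\li$) are left unexecuted at exactly the points you yourself flag as delicate. In the quadrangle approach you never determine how to encode ``tangent to $\li$ at $T$'' in the choice of the four points, and the final factorisation $q^7(q^3-1)(q^2+q+1)$ is only conjectured to ``assemble'' from unspecified choices; in the group-theoretic approach neither the transitivity of the relevant subgroup on tangent \orsps\ through $T$ nor the stabiliser order is computed. As it stands, no step of the argument actually produces the number in the statement.

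The paper's proof sidesteps the difficulty of building tangency into a direct count. It first counts \emph{all} \orsps\ of $\PG(2,q^3)$ by the standard quadrangle quotient, obtaining $q^6(q^6+q^3+1)(q^2-q+1)(q^2+q+1)$. It then double counts incident pairs $(m,\pi)$ with $m$ a line tangent to an \orsp\ $\pi$: each \orsp\ has $(q^2+q+1)(q^3-q)$ tangent lines (each of its $q^2+q+1$ points lies on $q^3+1$ lines of $\PG(2,q^3)$, of which $q+1$ are secants), and dividing by the $q^6+q^3+1$ lines gives the number $x$ of \orsps\ tangent to $\li$. Finally, transitivity of the stabiliser of $\li$ in $\PGL(3,q^3)$ on the points of $\li$ gives the answer as $x/(q^3+1)$. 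Notably, the incidence double count you relegate to a ``cross-check'' is the main engine of the actual proof; if you promote it to the central step and supply the tangent-line count per subplane, your outline becomes the paper's argument. Without that, the gap is the entire computation.
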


\begin{proof} We first count the total 
 number of \orsp s in $\PG(2,\r^3)$, it is
\[
\frac{(\r^6+\r^3+1)(\r^6+\r^3)\r^6(\r^6-2\r^3+1)}{(\r^2+\r+1)(\r^2+\r)\r^2(\r^2-2\r+1)}
=\r^6(\r^6+\r^3+1)(\r^2-\r+1)(\r^2+\r+1).
\]
Next we count the number $x$ of \orsp s tangent to $\li$.  We
count in two ways the number of pairs $(m,\pi)$ where $m$ is a line of $\PG(2,q^3)$ tangent to an \orsp\ $\pi$.  We have 
\[
\r^6(\r^6+\r^3+1)(\r^2-\r+1)(\r^2+\r+1)\times (\r^2+\r+1)(\r^3-\r)=(\r^6+\r^3+1)x
\] and so $x=\r^7(\r^2-\r+1)(\r^2+\r+1)^2(\r-1)(\r+1)$. As the subgroup
$\PGL(3,q^3)$ fixing the line $\li$ is transitive on the points of $\li$,
the number of \orsp s tangent to $\li$ at the point $T\in\li$ is $x/(q^3+1)=\r^7(q^3-1)(\r^2+\r+1)$.
\end{proof}

\begin{lemma}\Label{count-nrc}
Let $\Sigma$ be a 3-space of $\PG(6,q)\setminus\si$ about a spread element. The number
  of special normal rational curves in $\Sigma$
  is $q^3(q^3-1)$.
\end{lemma}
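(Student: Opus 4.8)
The plan is to prove the count by a double-counting argument, after reducing the defining property of a special normal rational curve to a single incidence condition via Frobenius invariance. First I would observe that a special normal rational curve in $\Sigma$ is exactly a normal rational curve $\N$ defined over $\GF(q)$ (equivalently, a twisted cubic of $\Sigma\cong\PG(3,q)$) whose extension $\N^*$ passes through the point $Q=\alpha^*\cap g$, where $\alpha=\Sigma\cap\si$ is the spread element. Indeed, since $\N$ is defined over $\GF(q)$ its extension $\N^*$ is invariant under the Frobenius collineation $x\mapsto x^q$; hence $Q\in\N^*$ automatically forces $Q^q,Q^{q^2}\in\N^*$, so the three conditions in the definition collapse to the single requirement $Q\in\N^*$. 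Thus the lemma amounts to counting the twisted cubics of $\Sigma$ whose extension contains the fixed point $Q$.

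Call a point $R$ of $\Sigma^*\cong\PG(3,q^3)$ \emph{generic} if its conjugates $R,R^q,R^{q^2}$ span a plane; this plane is then the unique $\GF(q)$-plane containing $R$, and $Q$ is generic since $Q,Q^q,Q^{q^2}$ span $\alpha^*$. I would double count the set $X$ of incident pairs $(\N,R)$ with $\N$ a twisted cubic of $\Sigma$ and $R$ a generic point of $\N^*$. On the curve side, the number of twisted cubics of $\PG(3,q)$ is $|\PGL(4,q)|/|\PGL(2,q)|=q^5(q^4-1)(q^3-1)$ by orbit-stabiliser (the stabiliser of a twisted cubic in $\PGL(4,q)$ being $\PGL(2,q)$ acting by reparametrisation). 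On each $\N^*$, parametrised by $\PG(1,q^3)$, the generic points are precisely those with parameter $t\in\GF(q^3)\setminus\GF(q)$: since $[\GF(q^3):\GF(q)]=3$ is prime, $t\notin\GF(q)$ makes $t,t^q,t^{q^2}$ distinct, so the corresponding three points of $\N^*$ are distinct, and as no three points of a twisted cubic are collinear they span a plane. This gives $q^3-q$ generic points per curve, so $|X|=q^5(q^4-1)(q^3-1)(q^3-q)$.

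On the point side, I would use that $\PGL(4,q)$, acting on $\Sigma$ and hence on $\Sigma^*$, is transitive on generic points, so every generic point lies on the same number $N_Q$ of twisted cubics. Transitivity holds because $\PGL(4,q)$ is transitive on $\GF(q)$-planes and the plane-stabiliser induces the full $\PGL(3,q)$ on $\alpha^*\cong\PG(2,q^3)$, which is itself transitive on the generic points of $\alpha^*$ with stabiliser a cyclic Singer-type group of order $q^2+q+1$. To count generic points of $\Sigma^*$, I would note that each lies in a unique $\GF(q)$-plane, that there are $q^3+q^2+q+1$ such planes, and that a short count inside $\PG(2,q^3)$ (all points, minus the $q^2+q+1$ rational points, minus the $(q^2+q+1)(q^3-q)$ non-rational points on $\GF(q)$-lines) gives $q^3(q-1)(q^2-1)$ generic points per $\GF(q)$-plane; multiplying yields $q^3(q^2-1)(q^4-1)$ generic points in $\Sigma^*$. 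Equating the two expressions for $|X|$ then gives $N_Q=q^5(q^4-1)(q^3-1)(q^3-q)/[q^3(q^2-1)(q^4-1)]=q^3(q^3-1)$, as required.

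The main obstacle is the point-side count: one must justify that $N_Q$ is independent of the generic point, and this is exactly where the transitivity of $\PGL(4,q)$ on generic points—resting on the identification of the Singer stabiliser of order $q^2+q+1$ in $\PGL(3,q)$—does the real work. The remaining ingredients, namely the genericity of the non-rational points of $\N^*$ and the enumeration of generic points of $\PG(3,q^3)$, are routine once the Frobenius reduction is in place.
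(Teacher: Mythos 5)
Your proof is correct but follows a genuinely different route from the paper's. The paper disposes of this lemma in a few lines by quoting \cite[Theorem~2.5]{barw12}, which puts the special normal rational curves of $\Sigma$ in bijection with the \orsls\ of an affine line $\ell$ that miss $\li$, and then simply counts sublines: $q^2(q^2+q+1)(q^2-q+1)$ sublines of $\ell$ in all, of which $q^2(q^2+q+1)$ pass through $\ell\cap\li$, leaving $q^3(q^3-1)$. Your argument never leaves $\Sigma\cong\PG(3,q)$: after the (correct) observation that Frobenius invariance of $\N^*$ collapses the three conditions $Q,Q^q,Q^{q^2}\in\N^*$ to the single condition $Q\in\N^*$, you count twisted cubics through the ``generic'' point $Q$ by double counting, using orbit--stabiliser for the total number of twisted cubics and the transitivity of $\PGL(4,q)$ on generic points; your Singer-stabiliser justification of that transitivity is sound, and all the intermediate counts check out ($q^3-q$ generic points per curve, $q^3(q^2-1)(q^4-1)$ generic points in $\Sigma^*$, ratio $q^3(q^3-1)$). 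What your approach buys is independence from the Bruck--Bose machinery of \cite{barw12}; what it costs is reliance on the classical facts that the stabiliser of a twisted cubic in $\PGL(4,q)$ is exactly $\PGL(2,q)$ and hence that $\PG(3,q)$ contains $q^5(q^4-1)(q^3-1)$ twisted cubics. That input is standard but not free: for $q\le 4$ it holds only if a twisted cubic is understood as a curve (a parametrisation modulo $\PGL(2,q)$, or equivalently the variety over the extension field) rather than as its set of $\GF(q)$-rational points --- for $q=4$, for instance, the five rational points form a frame whose setwise stabiliser $S_5$ strictly contains $\PGL(2,4)$. Since your argument works throughout with the extension $\N^*$, the curve-level interpretation is the right one and the computation goes through for all $q$, but a careful write-up should make this distinction explicit and cite the stabiliser result.
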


\begin{proof}
By \cite[Theorem 2.5]{barw12}, the number of special normal rational curves
in $\Sigma$ is equal to the number of \orsls\ of a line
$\ell$ ($\ell\neq\li$) that are disjoint from $\li$. 
There are $${{q^3+1}\choose{3}}\Big/{{q+1}\choose{3}}=q^2(q^2+q+1)(q^2-q+1)$$
sublines of $\ell$. Of these,
$${{q^3}\choose{2}}\Big/{{q}\choose{2}}=q^2(q^2+q+1)$$ contain the point
$\ell\cap\li$. Hence there are $q^2(q^2+q+1)(q^2-q)=q^3(q^3-1)$ \orsl s of
$\ell$ that are disjoint from $\li$. 
\end{proof}

\begin{lemma}\Label{twopointsspecialconic}
 Two points in a spread element $\pi$ lie in a unique special
  conic of $\pi$. Further, every special conic of $\pi$ is non-degenerate.
\end{lemma}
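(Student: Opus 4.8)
The plan is to realise a special conic of $\pi$ through two points as the unique conic of the extended plane $\pi^*\cong\PG(2,q^3)$ through five prescribed points, and then to descend it to $\GF(q)$ using the Frobenius action. Recall that the transversal points $P=\pi^*\cap g$, $P^q=\pi^*\cap g^q$, $P^{q^2}=\pi^*\cap g^{q^2}$ form a single orbit under the Frobenius collineation $\sigma: x\mapsto x^q$. They are distinct, since the points of $\si^*$ fixed by $\sigma$ are exactly those of $\si$ whereas $P,P^q,P^{q^2}\in\si^*\setminus\si$; and they are non-collinear, since the plane they span meets $\si$ in $\pi$, so $\langle P,P^q,P^{q^2}\rangle=\pi^*$. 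By definition a conic $\C$ of $\pi$ is special exactly when its extension $\C^*$ passes through $P,P^q,P^{q^2}$.

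Given two distinct points $A,B$ of $\pi$, I would consider the five points $A,B,P,P^q,P^{q^2}$ of $\pi^*$ and verify that no three of them are collinear. The triple $P,P^q,P^{q^2}$ is covered above. Were $A,B$ collinear with a transversal point, say $P$, then $P$ would lie on the line $AB$; but $AB$ is defined over $\GF(q)$ and hence $\sigma$-invariant, so applying $\sigma$ would place $P^q$ and $P^{q^2}$ on $AB$ too, making $P,P^q,P^{q^2}$ collinear, a contradiction. Were a rational point, say $A$, collinear with two transversal points, say $P$ and $P^q$, then $A$ would lie on the line $PP^q$; since $A$ is $\sigma$-fixed it would also lie on $(PP^q)^\sigma=P^qP^{q^2}$, forcing $A=PP^q\cap P^qP^{q^2}=P^q$, which is impossible as $A\in\si$ while $P^q\notin\si$. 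The remaining triples follow by symmetry. Thus the five points are in general position, so there is a unique conic $\C^*$ of $\pi^*$ through them, and this conic is non-degenerate (a union of at most two lines cannot contain five points no three of which are collinear). Because the five-point set is $\sigma$-invariant, $(\C^*)^\sigma$ is a conic through the same five points, whence $(\C^*)^\sigma=\C^*$ by uniqueness; a $\sigma$-invariant conic of $\PG(2,q^3)$ is defined over $\GF(q)$ (Galois descent, the defining form being rescaled to be $\sigma$-fixed via Hilbert~90), so $\C^*$ is the extension of a conic $\C$ of $\pi$. Then $\C$ is a special conic through $A$ and $B$, and it is unique since any such conic extends to a conic of $\pi^*$ through the same five points.

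For the non-degeneracy of an arbitrary special conic $\C$, I would argue directly from the orbit $P,P^q,P^{q^2}$ lying on $\C^*$. Suppose $\C$, and hence $\C^*$, has rank at most two. If $\C$ is a repeated line or a pair of lines defined over $\GF(q)$, then $\C^*$ is a union of at most two $\sigma$-invariant lines, and the orbit $P,P^q,P^{q^2}$ then lies wholly on one of them (if $P$ lies on a $\sigma$-invariant line then so do $P^q$ and $P^{q^2}$), contradicting non-collinearity. If instead $\C$ is an irreducible rank-two conic over $\GF(q)$, namely a conjugate pair of lines defined over $\GF(q^2)$ but not over $\GF(q)$, then since $\GF(q^2)\cap\GF(q^3)=\GF(q)$ these lines remain undefined over $\GF(q^3)$, so the only $\GF(q^3)$-point of $\C^*$ is its vertex; this contradicts $P,P^q,P^{q^2}$ being three distinct $\GF(q^3)$-points of $\C^*$. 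Every case is impossible, so $\C$ is non-degenerate.

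The main obstacle is the general-position check of the second paragraph, and specifically the mixed triples built from rational and transversal points: the argument turns on the contrast between a line through two $\GF(q)$-points, which is $\sigma$-invariant, and a line through two transversal points, which is not, together with the fact that the transversal points lie in $\si^*\setminus\si$. The other delicate point is the separate treatment of the irreducible rank-two case in the non-degeneracy argument, which uses the coprimality of $2$ and $3$ to ensure that a conjugate pair of lines over $\GF(q^2)$ gains no new rational points on passing to $\GF(q^3)$.
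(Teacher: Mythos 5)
Your proposal is correct and follows essentially the same route as the paper: show that $A,B,P,P^q,P^{q^2}$ are five points of $\pi^*$ with no three collinear (using that $AB$ is $\sigma$-invariant while the lines joining transversal points are not), take the unique non-degenerate conic through them, and descend to $\GF(q)$ by Frobenius-invariance. The one place you go beyond the paper is the final non-degeneracy claim, which the paper dispatches in a single remark resting on the five-point argument; your explicit rank analysis, in particular the case of a conjugate line pair over $\GF(q^2)$ whose extension to $\GF(q^3)$ acquires no new rational points, covers a degenerate possibility (a special conic with too few $\GF(q)$-points to invoke the five-point configuration) that the paper's remark passes over silently.
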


\begin{proof}
In the cubic extension $\PG(6,q^3)$,  $\pi^*$ contain the three
transversal points 
$P=\pi^*\cap g,P^q=\pi^*\cap g^q, P^{q^2}=\pi^*\cap g^{q^2}$.
Let $A,B$ be two points of $\pi$. 
We first show that $A,B,P,P^q,P^{q^2}$ are five
points, no three
collinear.

If the line
$PP^q$ meets $\pi$ in a point $X$, then $X^q\in (PP^q)^q=P^qP^{q^2}$. As
$X\in\pi$, $X^q=X$, and so $X,P,P^q,P^{q^2}$ are collinear, a
contradiction as $P,P^q,P^{q^2}$ generate a plane and so are not collinear. 
So the lines $PP^q$, $PP^{q^2}$, $P^qP^{q^2}$ are all disjoint
from $\pi$, that is, no point of $\pi$ is on one of these lines. 

Next we show that the line $m=AB$
does not contain any of $P,P^q,P^{q^2}$. 
As $m$ is a line of $\pi$, we have $m^q=m$. If $P\in m$, then $P^q\in
m^q=m$,
 and similarly $P^{q^2}\in m$, a
contradiction. So $m$ does not contain $P$, $P^q$ or $P^{q^2}$.

Hence we can pick any two points $A,B$ of $\pi$ and  the five points
$A,B,P,P^q,P^{q^2}$ are no three collinear, and so lie in a unique non-degenerate
        conic $\C^*$ of $\pi^*$.
This
conic is fixed by the Frobenius automorphism $x\mapsto x^q$, and so $\C$ is
a conic of $\pi$. 
Note that this also means that any special conic of $\pi$ is non-degenerate.
\end{proof}

\begin{lemma}\Label{count-conics} The number of special conics in a spread
  element $\pi$ is 
  $q^2+q+1$.
\end{lemma}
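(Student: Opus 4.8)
The plan is to count by a double-counting argument over incident pairs $(\{A,B\},\C)$, where $\C$ is a special conic of $\pi$ and $A,B$ are two distinct points of $\pi$ lying on $\C$, using the bijective structure supplied by Lemma~\ref{twopointsspecialconic} as the engine of the count. First I would record the two facts from that lemma that do all the work: every special conic of $\pi$ is non-degenerate, and any two points of $\pi$ lie on exactly one special conic. The non-degeneracy matters because a non-degenerate conic of $\pi\cong\PG(2,q)$ carries exactly $q+1$ points over $\GF(q)$, all of which lie in $\pi$; and $\pi$ itself contains exactly $q^2+q+1$ points.

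Next I would count the incident pairs in two ways. Counting by the point-pair first: there are $\binom{q^2+q+1}{2}$ unordered pairs of points of $\pi$, and by Lemma~\ref{twopointsspecialconic} each lies on a unique special conic, so the number of incident pairs is exactly $\binom{q^2+q+1}{2}$. Counting by the conic first: each special conic is non-degenerate and hence meets $\pi$ in exactly $q+1$ points, contributing $\binom{q+1}{2}$ pairs; writing $N$ for the number of special conics, the number of incident pairs is $N\binom{q+1}{2}$.

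Equating the two counts and simplifying (using $q^2+q=q(q+1)$) gives
\[
N=\binom{q^2+q+1}{2}\Big/\binom{q+1}{2}
 =\frac{(q^2+q+1)\,q(q+1)}{q(q+1)}=q^2+q+1,
\]
as required.

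The one step I would be most careful about is verifying that the map from point-pairs of $\pi$ to special conics has \emph{constant} fibre size $\binom{q+1}{2}$, so that the second count is valid. This is precisely where the two clauses of Lemma~\ref{twopointsspecialconic} combine: non-degeneracy guarantees each special conic has the full complement of $q+1$ rational points in $\pi$ (so the fibre over $\C$ has size exactly $\binom{q+1}{2}$ and not fewer), while the uniqueness clause guarantees that a pair $\{A,B\}$ with $A,B\in\C$ is sent back to $\C$ and to no other special conic (so no pair on $\C$ is counted in a different fibre). Once this regularity is in hand the equality of counts is immediate and the arithmetic is routine.
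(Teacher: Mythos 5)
Your proposal is correct and is essentially the paper's own argument: the paper likewise invokes Lemma~\ref{twopointsspecialconic} and computes $(q^2+q+1)(q^2+q)/\bigl((q+1)q\bigr)=q^2+q+1$, which is exactly your double count of point-pairs against the $q+1$ points on each non-degenerate special conic. You simply spell out the fibre-size verification that the paper leaves implicit.
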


\begin{proof} 
We want to count the number
of conics of $\pi$ that in the cubic extension $\pi^*$ contain the three
transversal points 
$P=\pi^*\cap g,P^q=\pi^*\cap g^q, P^{q^2}=\pi^*\cap g^{q^2}$.
By Lemma~\ref{twopointsspecialconic}, two points $A,B$ of $\pi$ lie in a
unique special conic of $\pi$.
 The number of ways to
choose $A,B$, so that the conic is distinct is
$(q^2+q+1)(q^2+q)/(q+1)q=q^2+q+1$ as required.
\end{proof}

\begin{lemma}\Label{count-triples}
 The number of triples $(\C,\N,\bbb)$ where $\C$ is a special conic
  in a fixed spread element $\pi$, $\N$ is a special normal rational curve in
  any 3-space of $\PG(6,q)\setminus\si$ about a spread element
  $\alpha\neq\pi$, and $\bbb$ is the unique ruled surface with directrices
  $\C,\N$ such that in the cubic extension $\PG(6,q^3)$, $\bbb^*$ contains the transversal lines $g,g^q,g^{q^2}$  is 
$q^9(q^3-1)(q^2+q+1)$.
\end{lemma}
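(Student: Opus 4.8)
The plan is to exploit the fact that, once the pair of directrices $(\C,\N)$ is fixed, the ruled surface $\bbb$ is completely determined; the count of triples $(\C,\N,\bbb)$ then collapses to a count of admissible pairs $(\C,\N)$. I would first establish this determinacy, and then multiply the independent counts of special conics (Lemma~\ref{count-conics}) and of special normal rational curves, the latter obtained by combining Lemma~\ref{count-nrc} with a count of the admissible $3$-spaces $\Sigma$.

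For the determinacy step, recall that a ruled surface of this type is determined by a projectivity $\phi\colon\C^*\to\N^*$ between its directrices (Theorem~\ref{subplane-tangent}(b)), the generators being the lines joining $X$ to $\phi(X)$. Read on the common parameter line $\PG(1,q^3)$ of the two directrices, $\phi$ is an element of $\PGL(2,q^3)$. The transversal points $P,P^q,P^{q^2}$ on $\C^*$ and $Q,Q^q,Q^{q^2}$ on $\N^*$ are genuinely distinct, since each lies on a transversal line and so lies outside $\PG(6,q)$, hence is not fixed by Frobenius. As $P^{q^i}$ and $Q^{q^i}$ both lie on $g^{q^i}$, the line $P^{q^i}Q^{q^i}$ is exactly $g^{q^i}$; thus requiring $\bbb^*$ to contain $g,g^q,g^{q^2}$ is precisely requiring $\phi(P^{q^i})=Q^{q^i}$ for $i=0,1,2$. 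Since $\PGL(2,q^3)$ is sharply $3$-transitive on $\PG(1,q^3)$, these three prescriptions determine $\phi$, and hence $\bbb^*$, uniquely. Finally, because the triples $\{P,P^q,P^{q^2}\}$ and $\{Q,Q^q,Q^{q^2}\}$ are each permuted by Frobenius, the conjugate projectivity $\phi^q$ satisfies the same three conditions and therefore equals $\phi$; so $\phi$ is defined over $\GF(q)$ and $\bbb=\bbb^*\cap\PG(6,q)$ is a genuine ruled surface of $\PG(6,q)$. Hence every admissible pair $(\C,\N)$ gives exactly one triple, and the number of triples equals the number of such pairs.

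It then remains to count the pairs, and the two factors are independent because $\C$ lies in the fixed element $\pi$ while $\N$ lies about a different element $\alpha\neq\pi$. Lemma~\ref{count-conics} gives $q^2+q+1$ choices for $\C$. For $\N$ I would count in three stages: there are $q^3$ choices of spread element $\alpha\neq\pi$ among the $q^3+1$ elements of $\S$; for each $\alpha$ there are $q^3$ choices of $3$-space $\Sigma$ about $\alpha$, since these are the lines of $\pbb$ through the infinite point determined by $\alpha$ other than $\li$, numbering $(q^3+1)-1=q^3$; and by Lemma~\ref{count-nrc} there are $q^3(q^3-1)$ special normal rational curves in each such $\Sigma$. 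This yields $q^3\cdot q^3\cdot q^3(q^3-1)=q^9(q^3-1)$ curves $\N$, and multiplying by the $q^2+q+1$ conics gives the required total $q^9(q^3-1)(q^2+q+1)$.

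The only step beyond routine bookkeeping is the determinacy argument. Its heart is that the three transversal incidences pin down the defining projectivity through sharp $3$-transitivity, and that the Frobenius-invariance of this projectivity lets $\bbb$ descend from $\PG(6,q^3)$ to $\PG(6,q)$. Once this is secured, the enumeration is a direct product of the two counts and I anticipate no further difficulty.
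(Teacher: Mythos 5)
Your proposal is correct and follows essentially the same route as the paper: the same product count $(q^2+q+1)\cdot q^3\cdot q^3\cdot q^3(q^3-1)$, uniqueness of the projectivity via its values on the three transversal points, and descent of $\bbb^*$ to $\PG(6,q)$ by Frobenius invariance. Your phrasing of the descent step (showing $\phi^q=\phi$ directly) is a slightly cleaner packaging of the paper's argument that $\sigma$ fixes $\C^*$, $\N^*$ and the transversal triple, but it is the same idea.
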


\begin{proof}
In   Lemma~\ref{count-conics}
 we show that the  number of special conics in a fixed spread element
 $\pi$ is $q^2+q+1$. There are $q^3$ choices for the spread element
 $\alpha$, and each spread element lies in $q^3$ 3-spaces of
 $\PG(6,q)\setminus\si$. In Lemma~\ref{count-nrc} we showed that the number
 of special normal rational curves in a 3-space is $q^3(q^3-1)$. 
 Finally, as a projectivity is uniquely determined
  by the image of three points, in the cubic extension $\PG(6,q^3)$ there
  is a unique ruled surface $\bbb^*$ with
  directrices $\C^*$ and $\N^*$ that contains the transversal lines
  $g,g^q,g^{q^2}$ of the spread $\S$. We now show that $\bbb^*$ meets $\PG(6,\r)$ in a ruled surface $\bbb$.

  The Frobenius automorphism $\sigma\colon x\mapsto x^\r$ fixes $\C$ and $\N$
  pointwise, and also fixes the set $\{g,g^q,g^{q^2}\}$. As $q\geq 2$, $\C$
  has at least 3 points in $\pi$, and so $\C^*,\C^{*q}$ have at least six common
  points, hence $\sigma$ fixes
  $\C^*$. Similarly, $\sigma$ fixes $\N^*$.
 Thus $\sigma$ fixes $\bbb^*$ since $\bbb^*$ is determined by a
 projectivity, and the three lines
  $\{g,g^q,g^{q^2}\}$ uniquely determine this projectivity.  
As $\sigma$ fixes exactly the points of $\PG(6,q)$, it follows that
$\bbb^*$ meets $\PG(6,q)$ in a ruled surface $\bbb$ with directrices $\C,\N$. 
That is, $\bbb$ satisfies the conditions of the lemma.
Hence the
  number of triples is $(q^2+q+1)\times(q^3\times q^3\times
  q^3(q^3-1))\times1$ as required.
\end{proof}

{\bf Proof of Theorem~\ref{hope1}} To complete the proof of
Theorem~\ref{hope1}, we count the number of triples $(\C,\N,\bbb)$ where
$\bbb$ is a ruled surface of $\PG(6,q)$ that corresponds to a tangent
\orsp\ of $\PG(2,q^3)$ through a fixed point
$T$ of $\li$, and $\bbb$ has conic directrix $\C$ and normal rational curve directrix
$\N$. 
In Lemma~\ref{count-orsp} we showed that the number of tangent \orsp s
through a fixed point $T$ is $q^7(q^3-1)(q^2+q+1)$. 

Now a tangent \orsp\ that meets $\li$ in the point $T$ contains $q^2$ \orsl
s that are not through $T$. By \cite[Theorem 2.5]{barw12}, each of these
sublines corresponds to a special normal
rational curve in some 3-space about a spread element. Moreover, this
correspondence is exact. Hence a ruled surface $\bbb$ that
corresponds to a tangent  \orsp\ has a exactly one conic directrix, and
$q^2$ normal rational curve directrices. Thus the number of triples is
$q^2\times q^7(q^3-1)(q^2+q+1)$. This is the same as the number of triples
in Lemma~\ref{count-triples}. 

Hence the number of ruled 
surfaces satisfying the conditions of Theorem~\ref{hope1} is equal to the number of ruled surfaces that correspond to
tangent \orsp s. Hence every ruled surface satisfying the conditions of
Theorem~\ref{hope1} does indeed correspond
to a tangent \orsp\ as required.
\hfill $\square$

\end{document}